\newtheorem{theorem}{Theorem}[section]
\newtheorem{definition}[theorem]{Definition}
\newtheorem{question}[theorem]{Question}
\newtheorem{remark}[theorem]{Remark}
\newtheorem{lemma}[theorem]{Lemma}
\newtheorem{corollary}[theorem]{Corollary}
\newtheorem{proposition}[theorem]{Proposition}
\newcommand{\diag}{\operatorname{diag}}
\newcommand{\rank}{\operatorname{rank}}
\newcommand{\wt}{\mathrm{W_H}}
\def\P{\mathbf P}
\def\fqs{\mathbb F_{q^2}}
\def\fq{\mathbb F_q}
\def\Fq{\mathbb F_q}
\def\rank{{\rm rank}}
\def\X{\mathbb X}
\def\Xr{\mathbb {X}_r}
\def\Xleqt{\mathbb {X}_{\leq t}}
\def \C {C_{symm}(t, m)}
\def \CA{\widehat{C}_{symm}(t, m)}
\def \Ceven {C_{symm}(2t, m)}
\def \Codd {C_{symm}(2t+1, m)}
\begin{document}
	
	\title[Symmetric Determinantal Codes ]{Linear Codes Associated to Symmetric Determinantal Varieties: General Case}
	\author{Peter Beelen }
\address{Department of Applied Mathematics and Computer Science,\newline \indent
	Technical University of Denmark,\newline \indent
	Matematiktorvet 303B, 2800 Kgs. Lyngby, Denmark.}
\email{pabe@dtu.dk }

	\author{Trygve Johnsen}
\address{Department Mathematics and Statistics,\newline \indent
	UiT: The Arctic University of Norway,\newline \indent
	Hansine Hansens veg 18, 9019 Tromsø, Norway.}
\email{trygve.johnsen@uit.no}

	\author{Prasant Singh}
\address{Department of  Mathematics,\newline \indent
	IIT Jammu,\newline \indent
	 Jammu \& Kashmir, 181221.}
\email{psinghprasant@gmail.com}	

	\date{\today}
	
\begin{abstract}
The study of linear codes over a finite field $\Fq$, for odd $q$, derived from determinantal varieties obtained from symmetric matrices of bounded rank, was initiated in \cite{BJS}. There, one found the minimum distance of the code obtained from evaluating homogeneous linear functions at all symmetric matrices with rank, which is, at most, a given even number. Furthermore, a conjecture for the minimum distance of codes from symmetric matrices with ranks bounded by an odd number was given.

In this article, we continue the study of codes from symmetric matrices of bounded rank.   A connection between the weights of the codewords of this code and $Q$-numbers of the association scheme of symmetric matrices is established. Consequently, we get a concrete formula for the weight distribution of these codes. Finally, we determine the minimum distance of the code obtained from evaluating homogeneous linear functions at all symmetric matrices with rank at most a given number,
both when this number is odd and when it is even.

	

\end{abstract}

\maketitle

\section{Introduction}
Constructing linear codes with a good minimum distance is a classical research problem in coding theory. There are many different ways to construct such codes, but one of the most interesting ways is using the language of a projective system. A projective system is a (multi)set of a projective space over finite fields. It has been proved \cite[Theorem 1.4]{TVN} that every linear code can be constructed in this way. Further, the parameters of this code can be studied from several geometric properties of the corresponding projective system. Projective varieties over finite fields with many $\Fq$-rational points are rich sources of projective systems, and several researchers have constructed different classes of codes.
Determinantal varieties in the space of different types of matrices are known to have many $\Fq$-rational points. Thus, it is natural to construct codes from such varieties and study their parameters. The study of codes associated to classical determinantal varieties was initiated by Beelen-Ghorpade-Hasan \cite{BGH} and the minimum distance of such codes was computed in a special case. Later, Beelen-Ghorpade \cite{BG} computed the minimum distance of these codes in the general case.  The problem of determining the minimum distance of these codes was studied by Ravagnani \cite{Ravagnani} in a different context. The study of codes associated with determinantal varieties in the space of skew-symmetric matrices was initiated in \cite{BS}, and the minimum distance of the corresponding codes was determined when the field under consideration is of odd characteristic.

{\it Symmetric determinantal varieties} are determinantal varieties in the space of symmetric matrices. 
For a detailed exposition of these varieties and several of their interesting properties, we refer to \cite{HTu, JLP}. Furthermore, the numbers of $\Fq$ -rational points of these varieties were determined by MacWilliams \cite{MacWilliams}. A classification of symmetric matrices is also well understood, see \cite{Carlitz}. The set of $\Fq$-rational points of symmetric determinantal varieties has been explored from different points of view. For example, the eigenvalues of association schemes of symmetric matrices are known \cite{Schmidt2020}. Several other properties of these association schemes, for example, their $P$-numbers and $Q$-numbers, are also known.

Since symmetric determinantal varieties are smooth projective varieties with many $\Fq$-rational points, it is natural to study linear codes associated with these varieties. Linear code associated with a projective system of symmetric determinantal varieties is known as {\it symmetric determinantal codes}. We initiated the study of these codes in \cite{BJS} and determined several interesting properties of them. In particular, we determined the minimum distance of symmetric determinantal codes arising from symmetric determinantal variety consisting of symmetric matrices of bounded rank bounded by an even number. Furthermore, we gave a table consisting of all possible weights of symmetric determinantal code in some special cases. Based on it, we proposed a conjecture \cite{BJS, BJS1} for the minimum distance of symmetric determinantal code, which also applies when the rank bounds are odd. In this article, we give the weight distribution of the symmetric determinantal codes, valid for both parties of the rank bound. Furthermore we prove the conjecture proposed in \cite{BJS}. In order to do so, we present a proof which is valid for all symmetric determinantal code over any finite field of odd characteristic. As a consequence, we give a new proof of the main results given in \cite{BJS}.


The results proved in this article can be interpreted in a more geometric way: The minimum distance of the code obtained from a projective system is equivalent to determining the maximum number of points of the projective system lying on a hyperplane. Thus, the minimum distance of the symmetric determinantal code also determines the maximum number of $\Fq$-rational points of symmetric determinantal variety lying on an $\Fq$-hyperplane. Moreover, we have determined the weight distribution of the symmetric determinantal code. This is equivalent to finding the cardinalities of all possible $\Fq$-hyperplane sections of the set of $\Fq$-rational points of a symmetric determinantal variety. The geometric interpretation of the results proved in this article is the following: given $\delta\in\Fq$, then for any $1\leq k\leq m$, the $\delta$-partial $k$-trace of a symmetric matrix $A=(a_{ij})$ is defined as $a_{11}+\dots+a_{k-1k-1} +\delta a_{kk}$. The weight distribution of the symmetric determinantal code is equivalent to finding the number of symmetric matrices of bounded rank and of $\delta$-partial $k$-trace nonzero for all such $\delta$. Dividing by $q-1$, one can determine the number of symmetric matrices of the bounded rank of given $\delta$-partial $k$-trace equal to $\alpha\in\Fq^*$. In fact we have computed the {\it restricted weights} of symmetric determinantal code as well and this actually determines the number of symmetric matrices of fixed rank and fixed $\delta$-partial $k$-trace.

\section{ Preliminaries}
 This section is divided into three parts. In the first part, we will discuss and give the definition of {\it symmetric determinantal code}. We will refer to \cite{BJS} for notation, and introduction to the problem of how one can determine the minimum distance of the code. In the second part, we will recall some results from Delsarte-Goethals \cite{DG1}. We will mainly be interested in the definition of generalized Krawtchouk polynomials, which arise naturally in studying association schemes of bilinear and alternating bilinear forms. In the last part of this section, we will recall some results from K.U. Schmidt \cite{Schmidt2020}. We will mainly recall the formula for $Q$-numbers of association schemes of bilinear forms in the form of a generalized Krawtchouk polynomial. With these results, we will conclude this section, and later, we will use these results to calculate the minimum distance of the symmetric determinantal code.

Throughout this article, $m$ is a fixed positive integer, $q$ is some power of an odd prime, and $\Fq$ is a finite field with $q$ elements. Let ${\bf X}=(x_{ij})$ be an $m\times m$ symmetric matrix in indeterminates $x_{ij}$. For any $0\leq t \leq m$, the {\it symmetric determinantal variety of rank $t$} is the projective variety defined by the ideal generated by all $(t+1)\times (t+1)$ minors of ${\bf X}$. The set of $\Fq$-rational points of this variety is denoted by $\mathbb{P}(\Xleqt)$. By abuse of language, we will call this set the symmetric determinantal variety of rank $t$ as well. If $\P^{{m+1\choose 2}-1}$ denotes the projective space over $\Fq$ of dimension ${m+1\choose 2}-1$, then it is not difficult to show that the symmetric determinantal variety of rank $t$ is a non-degenerate subset of $\P^{{m+1\choose 2}-1}$. Thus, using the language of projective systems  \cite[Chapter 1]{TVN}, one can talk about the linear code associated to the variety $\mathbb{P}(\Xleqt)$. The linear code associated with $\mathbb{P}(\Xleqt)$ is called the {\it symmetric determinantal code}, and we denote it by $\CA$. The length and the dimension of this code are easy to determine. The length of this code is $|\mathbb{P}(\Xleqt)|$, and the dimension of this code is ${m+1\choose 2}$. The main aim of this article is to determine the minimum distance of this code.

 One can associate a linear code corresponding to the affine cone over $\mathbb{P}(\Xleqt)$. We will denote this code by $\C$. This is shown \cite[Section 2, equation 8]{BJS} the parameters of the code $\CA$ and the code $\C$ can be studied from one another. In particular, the minimum distance of the code $\C$ is $(q-1)$ times the minimum distance of the code $\CA$. Therefore, we will mainly focus on the code $\C$ for the rest of the article. 
 For more details on this code, we will refer to\cite{BJS}.

Let $\X$ be the set of all $m\times m$ symmetric matrices over $\fq$. It is well known that $\X$ is an $\Fq$ vector space of dimension ${m+1\choose 2}$.  For $0\leq t \leq m$, we denote by $\Xleqt$, the affine algebraic variety in $\X$ given by the vanishing of $(t+1)\times (t+1)$ minors of ${\bf X}$. This is the affine cone over the symmetric determinantal variety $\mathbb{P}(\Xleqt)$. Note that this affine cone can also be defined as

\begin{equation}
\label{eq: SymmDet}
\Xleqt= \{A\in\X: \rank(A)\leq t\}
\end{equation}
For any $0\leq r\leq m$, let $\Xr$ be the set of all symmetric matrices of size $m$ and rank $r$ and let $\mu_m(r)$ be the size of this set, i.e., 
\[
\Xr = \{A\in\X: \rank(A)=r\}\qquad \text{ and }\qquad 
\mu_m(r)= |\Xr|.
\]
A formula for $\mu_m(r)$ was give by MacWilliams \cite[Theorem 2]{MacWilliams}. 
It is clear from the definition that
\[
\Xleqt= \bigcup\limits_{r=0}^t \Xr.
\]
If $\nu_m(t)$ denotes that cardinality of the affine variety $\Xleqt$, then 
\begin{equation}
\label{eq:FixRanDec}
\nu_m(t)=\sum\limits_{r=0}^t \mu_m(r).
\end{equation}

For the sake of notation, let use write $\nu_m(t)= N$ and let 
$$
\Xleqt=\{A_1, A_2,\dots, A_N\}
$$
 in some fixed order. Let $\Fq[{\bf X}]_1$ be the vector space consisting of linear homogeneous polynomials in $x_{ij}$ for $i\leq j$. It is easy to see that $\Fq[{\bf X}]_1$  is a $\Fq$ vector space of dimension ${m+1\choose 2}$ Consider the evaluation map 
 \begin{align*}
     \mathrm{Ev}: \Fq[{\bf X}]_1 &\to  \Fq^N\\
     f({\bf X})&\mapsto c_f=(f(A_1),\dots, f(A_N)).
 \end{align*}

The image of the evaluation map  $\mathrm{Ev}$ is the code $\C$. We aim to find the minimum distance of this code. Let $f({\bf X})=\sum\limits_{i\leq j} bf_{ij}x_{ij}$ be a function in $\Fq[{\bf X}]_1 $. Since the field is of odd characteristic, one can associate a symmetric matrix $B_f =(b_{ij})$ as follows
$$
b_{ij}=\begin{cases}
    f_{ij}/2 & \text{ if }i\neq j\\
    f_{ii}  & \text{ if }i= j
\end{cases}.
$$
Note that, for any symmetric matrix $A$ and  $f({\bf X})\in \Fq[{\bf X}]_1$, we have
$f(A)=\mathrm{Tr}(B_fA)$ where $\mathrm{Tr}$ denotes the trace map. Therefore, the Hamming weight of the codeword $c_f$ is given by 
\begin{equation}\label{eq:Hammingweight1}
    \wt(c_f) =\left|\{A\in\Xleqt: \mathrm{Tr}(B_fA)\neq 0\}\right|. 
\end{equation}
 It has been proved \cite[Proposition 3.3]{BJS}, that corresponding to every codeword $c_f\in \C$, one can associate a diagonal matrix $G=\diag(1,1,\dots,1, \delta,0\dots, 0)$ of the same rank as of rank $B_f$ such that 
 $$
 \wt (c_f) = \left|\{A\in\Xleqt: \mathrm{Tr}(GA)\neq 0\}\right|.
 $$
Further, it has been shown that the weight $\wt(c_f)$ depends only on the rank of the matrix $G$ and $\delta$. 
Therefore, for any $0\leq k\leq m$ and $\delta\in\Fq$, we define 
$$
W_k^\delta(t, m):=\left|\{A\in\Xleqt : \mathrm{Tr}(GA)\neq 0\} \right|
$$
where $G=\diag(1,1,\dots,1, \delta,0\dots, 0)$ is the matrix of rank $k$. Therefore, now the problem of determining the minimum distance of the code $\CA$ is equivalent to answering the following question:
\begin{question}\label{Que:question1}
What is the minimum of  $W_k^\delta(t, m)$ when the minimum is over $k$ and $\delta$?
\end{question}

In \cite{BJS}, we answered the above question when $t$ is even. In fact, we proved that in the case $t$ is even, $W_1^\delta(t, m)$ is independent of $\delta$ and is the minimum among all $W_k^\delta(t, m)$. Moreover, we explicitly gave a formula for the precise value of   $W_1^\delta(t, m)$. The problem is open in the general case, i.e., when $t$ is odd, the problem is open. However, we conjectured \cite[Conjecture 4.3]{BJS} that in this case ($t$ is odd), $W_2^\delta(t, m)$  is going to be the minimum where $-\delta$ is a square in the field. We will answer to \ref{Que:question1} in the general case.

Now, we will recall the definition of generalized Krawtchouk polynomial. This part of the article is taken from \cite{DG1} although, to be consistent, we have changed some notations. Let $m$ and $q$ be as earlier. Set $n=\lfloor m/2\rfloor$. Let $\Omega$ be the set of all $m\times m$ skew-symmetric matrices over the field $\Fq.$ For $0\leq s\leq n$, define
$$
\Omega_s =\{A\in \Omega: \rank(A)= 2s\}.
$$
Let $\chi$ be a non-trivial character of the additive group $(\Fq, +)$. For two skew-symmetric matrices $A=(a_{ij})$ and $B=(b_{ij})$, we define
$$
[A, B] = \chi\left(\sum_{i<j}a_{ij}b_{ij}\right).
$$
Also, for a fixed $A\in \Omega_s$ and $0\leq r\leq n$, we define
\begin{equation}
    \label{eq:F function}
    F^{(m)}_r(s)=\sum_{B\in\Omega_r}[A, B], 
\end{equation}
It has been proved that the value of $ F^{(m)}_r(s)$ is independent of the matrix $A$ and the character $\chi$. It has been proved \cite[ Equation 15]{DG1} that 

\begin{equation}\label{eq:Fdef}
F_r^{(m)}(s)=\sum_{j=0}^r (-1)^{r-j}q^{(r-j)(r-j-1)}\left[ \begin{array}{c} n-j \\ n-r\end{array}\right]\left[ \begin{array}{c} n-s \\ j\end{array}\right]c^j.
\end{equation}
 where $c:=q^{m(m-1)/(2n)}$ and (be aware of the notation)
$$
\left[ \begin{array}{c} n \\ k\end{array}\right]=\prod_{i=0}^{k-1}\frac{q^{2n}-q^{2i}}{q^{2k}-q^{2i}}.
$$

In the last section, we will recall some results from \cite{Schmidt2020} related to $Q$-numbers of the association schemes of symmetric matrices. We begin by recalling that the classification of symmetric matrices (the quadratic forms) is well-known over finite fields. For example, a symmetric matrix of odd rank is always of parabolic type, but a symmetric matrix of even rank can be either hyperbolic or elliptic type. For a more detailed analysis of the classification of quadratic forms, we refer to \cite{Carlitz, Hir}. Therefore, we will say that a symmetric matrix is of type $1$ if it is hyperbolic and type $-1$ if it is elliptic. For convention, parabolic matrices, i.e., symmetric matrices of odd rank, will be called of type $0$. Therefore, for $0 \le r \le m$ and $\tau \in \{-1,0,1\}$, we define
$$\X_{r,\tau}=\{B \in \X \mid \mathrm{rank}(B)=r, \mathrm{type}(B)=\tau\}$$
Furthermore we define for $A,B \in \X$:
$$\langle A,B \rangle :=\chi(\mathrm{Tr}(A\cdot B)),$$
where $\mathrm{Tr}$ denotes the trace and $\chi: \fq \to \mathbb{C}$ is a character of $(\fq,+)$. It is well known (See \cite[Theorem 5.8]{LN}) that all possible characters of $(\Fq, +)$ are of the form
$\chi_\alpha(\beta)=\zeta_p^{\mathrm{Tr}_{\fqs/\mathbb{F}_p}(\alpha\cdot \beta)}$, where $\zeta_p$ is a $p$-root of unity in $\mathbb{C}$ and $\alpha \in \fq$.
The character $\chi_0$ is called the trivial character. It is well known that for any $x \in \fq$:
\begin{equation}\label{eq:charsum}
\sum_{\chi_\alpha} \chi_\alpha(x)=\left\{
\begin{array}{rl}
q & \text{ if $x=0$,}\\
0 & \text{otherwise.}
\end{array}
\right.
\end{equation}

\begin{definition}
Let $\chi: \fq \to \mathbb{C}$ be a character of $(\fq,+)$. Then we define
$$
Q^{\chi}_{r,\epsilon}(s,\tau)=\sum_{B \in \X_{r,\epsilon}}\langle A, B \rangle\quad \text{for $A \in \X_{s,\tau}$.}
$$
\end{definition}

It is not difficult to show that the value of $Q^{\chi}_{r,\epsilon}(s,\tau)$ does not depend on the choice of $A \in X_{s,\epsilon}$ and depends only on $s$ and $\epsilon$. Since $\mathrm{Tr}(A\cdot B)=\mathrm{Tr}(B\cdot A)$, one may replace $\langle A, B \rangle$ with $\langle B, A \rangle$ in the definition of  $Q^{\chi}_{r,\epsilon}(s,\tau)$. If $\chi$ is a trivial character,  then it is not difficult to show that $Q^{\chi}_{r,\epsilon}(s,\tau)=|X_{r,\tau}|=\mu_{r,\tau}{(m)}$. If $r$ is odd, the value of $\epsilon$ is irrelevant (there is only one type in that case). Similarly, if $s$ is odd, the value of $\tau$ is irrelevant. We may simplify the notation in these cases and omit $\epsilon$ and/or $\tau$. Note that the value of $\mu_{r,\tau}{(m)}$ is given by \cite[Proposition 2.4]{Schmidt2020}
$$\mu_{2r+1}(m)=\frac{1}{q^r}\frac{\prod_{i=0}^{2r}(q^m-q^i)}{\prod_{i=0}^{r-1}(q^{2r}-q^{2i})},$$
$$\mu_{2r,\tau}(m)=\frac{q^r+\tau}{2}\frac{\prod_{i=0}^{2r-1}(q^m-q^i)}{\prod_{i=0}^{r-1}(q^{2r}-q^{2i})}$$
and
$$\mu_{2r}(m)=\mu_{2r,1}(m)+\mu_{2r,-1}(m).$$

The $Q$-numbers defined above are known \cite[Theorem 3.1]{Schmidt2020}. We will conclude this section by recalling this result. We assume that $\chi$ is a nontrivial character and $q$ is odd. 

\begin{theorem}\label{thm:Qnumber} The $Q$-numbers of the association schemes corresponding to symmetric matrices are given by
$$Q_{0,1}^{\chi}(i)=1 \quad \text{and} \quad Q_{k}^{\chi}(0)=\mu_k(m).$$

\begin{equation}\label{eq:Qoddodd}
Q_{2r+1}^{\chi}(2s+1)=-q^{2r}F_r^{(m-1)}(s)
\end{equation}
\begin{equation}\label{eq:Qoddeven}
Q_{2r+1}^{\chi}(2s,\tau)=-q^{2r}F_r^{(m-1)}(s-1)+\tau q^{m-s+2r}F_r(s-1)^{(m-2)}
\end{equation}
\begin{equation}\label{eq:Qevenodd}
Q_{2r,\epsilon}^{\chi}(2s+1)=\frac12 q^{2r}F_r^{(m-1)}(s)+\epsilon \frac{q^r}{2}F_r^{(m)}(s)
\end{equation}
\begin{equation}\label{eq:Qeveneven}
Q_{2r,\epsilon}^{\chi}(2s,\tau)=\frac12 \left[ q^{2r}F_r^{(m-1)}(s-1)-\tau q^{m-s+2r-2} F_{r-1}^{(m-2)}(s-1)\right]+\epsilon \frac{q^r}{2}F_r^{(m)}(s).
\end{equation}
In particular, the value of 
$Q_{k,\epsilon}^{\chi}(i,\tau)$ is the same for all nontrivial  characters $\chi$ of $(\fq,+)$. 
\end{theorem}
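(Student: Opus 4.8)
The plan is to treat the association scheme of symmetric matrices as a \emph{translation scheme} on the additive group $\X \cong \Fq^{\binom{m+1}{2}}$, so that $Q_{r,\epsilon}^{\chi}(s,\tau)$ is simply the Fourier coefficient $\sum_{B \in \X_{r,\epsilon}} \chi(\mathrm{Tr}(AB))$ evaluated at a fixed $A \in \X_{s,\tau}$. The first reduction is to exploit congruence invariance: for $P \in GL_m(\Fq)$ the map $B \mapsto P^{\mathrm{T}} B P$ permutes $\X_{r,\epsilon}$ (congruence multiplies the discriminant by a square, hence preserves both rank and type) and satisfies $\mathrm{Tr}(A\cdot P^{\mathrm{T}} B P) = \mathrm{Tr}((P A P^{\mathrm{T}})B)$. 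Thus the sum depends on $A$ only through its congruence class, and I may assume $A = \mathrm{diag}(a_1,\dots,a_s,0,\dots,0)$ is in diagonal canonical form, with the product $a_1\cdots a_s$ in the square class dictated by $\tau$. Since $A$ and $B$ are symmetric, $\mathrm{Tr}(AB) = \sum_{i=1}^{s} a_i B_{ii}$ depends only on the first $s$ diagonal entries of $B$, which turns the $Q$-number into a weighted enumeration of symmetric matrices by rank, type, and the character value of this partial trace.

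Next I would isolate the type of $B$. Writing $\eta$ for the quadratic character of $\Fq^{\ast}$, the type $\epsilon$ of a symmetric matrix of even rank $2\rho$ is read off from the square class of the discriminant of its nondegenerate part; this lets me write $\mathbf{1}_{\X_{2\rho,\epsilon}}$ as a combination of the rank-indicator $\mathbf{1}_{\{\rank = 2\rho\}}$ and a discriminant-signed weight. Splitting the sum accordingly produces exactly the shape $\tfrac12[\,\cdots\,] + \epsilon\,\tfrac{q^{\rho}}{2}\,[\,\cdots\,]$ seen in \eqref{eq:Qevenodd} and \eqref{eq:Qeveneven}, the factor $q^{\rho}$ being the modulus of the quadratic Gauss sum that accompanies the signed part. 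The analogous but simpler bookkeeping for odd $r$ accounts for \eqref{eq:Qoddodd} and \eqref{eq:Qoddeven}, where no $\epsilon$-dependence survives because odd-rank forms carry a single type.

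The crux is the evaluation of the two resulting Gauss sums and their identification with alternating-form sums. Summing the character over the off-diagonal entries of $B$ with its diagonal partially constrained, and completing squares on the unconstrained diagonal entries via the one-dimensional identity $\sum_{x\in\Fq}\chi(ax^2) = \eta(a)\,g$ (where $g$ is the quadratic Gauss sum, $|g| = q^{1/2}$), collapses the symmetric-matrix sum onto a sum over skew-symmetric matrices of reduced size. This is where $F_r^{(m-1)}$, $F_r^{(m)}$ and $F_{r-1}^{(m-2)}$ enter: each elimination of a diagonal coordinate, or of a hyperbolic pair carrying the fixed partial trace, lowers the ambient dimension by one or two, while the surviving Gauss sums combine in pairs into integer powers of $q$, yielding the factors $q^{2r}$, $q^{m-s+2r}$ and $q^{m-s+2r-2}$. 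Carrying this out separately over the four parities of $(r,s)$ reproduces the four displayed formulas, and the base cases $Q_{0,1}^{\chi}(i)=1$ and $Q_{k}^{\chi}(0)=\mu_k(m)$ are immediate from the definition.

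I expect the main obstacle to be precisely this last identification: controlling the quadratic Gauss sums attached to the diagonal together with the discriminant/type bookkeeping so that the signs $\tau,\epsilon$ and the exact powers of $q$ come out correctly, and rigorously matching the reduced symmetric sum to the alternating-form quantity $F_r^{(\bullet)}$ with its correct superscript. Finally, the character-independence asserted at the end is transparent once everything is expressed through the $F$-functions, since \eqref{eq:Fdef} shows $F_r^{(m)}(s)$ is manifestly free of $\chi$; alternatively, replacing $\chi$ by $\chi_{\alpha}$ amounts to the substitution $A \mapsto \alpha A$, which for $\alpha \neq 0$ preserves rank and (as $\alpha^{2\rho}$ is a square) type, and hence leaves the sum unchanged.
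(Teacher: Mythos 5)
The paper does not actually prove this theorem: it is quoted verbatim from \cite[Theorem 3.1]{Schmidt2020}, so there is no internal proof to compare yours against. Judged on its own terms, your outline follows the standard route to such results for translation schemes: reduce to a diagonal representative of $A$ by congruence invariance, split off the type of $B$ via the quadratic character, evaluate diagonal Gauss sums, and match what remains to the eigenvalues $F_r^{(\bullet)}$ of the alternating-forms scheme of \cite{DG1}. The easy parts --- the congruence invariance of the character sum, the base cases $Q_{0,1}^{\chi}(i)=1$ and $Q_{k}^{\chi}(0)=\mu_k(m)$, and the character-independence via the substitution $A \mapsto \alpha A$ (which preserves type because $\alpha^{s}$ is a square when $s$ is even, and type is irrelevant when $s$ is odd) --- are all correct as you state them.

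As a proof, however, the proposal has a genuine gap, and you locate it yourself in your final paragraph. The entire content of \eqref{eq:Qoddodd}--\eqref{eq:Qeveneven} is the exact identification of the reduced character sum with $q^{2r}F_r^{(m-1)}$, $q^{m-s+2r}F_r^{(m-2)}$, $q^{r}F_r^{(m)}$ and $q^{m-s+2r-2}F_{r-1}^{(m-2)}$, with the correct superscripts, the correct arguments ($\ell$ versus $\ell-1$, i.e.\ the shift to $s-1$ in the even-$s$ cases), the correct signs, and the correct powers of $q$. Your sketch asserts that the surviving Gauss sums combine in pairs into integer powers of $q$ and that the collapsed sum lands on the alternating-forms quantities, but it supplies no mechanism for tracking which of $m$, $m-1$, $m-2$ appears in the superscript, why the argument shifts by one in \eqref{eq:Qoddeven} and \eqref{eq:Qeveneven}, or how the two pieces of the $\epsilon$-split separately acquire the coefficients $\tfrac12 q^{2r}$ and $\tfrac{q^r}{2}$. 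That bookkeeping is precisely where the actual proof (in \cite{Schmidt2020}, via the duality of the translation scheme and a careful case analysis of quadratic-form types) does its work; without it the four displayed formulas are not established. Either carry out that computation in full, or do what the paper itself does and cite \cite[Theorem 3.1]{Schmidt2020} rather than prove the result.
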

Therefore, for any nontrivial character $\chi$ of $(\Fq, +)$, we will omit the $\chi$ while writing the $Q$ numbers and just write $Q_{r,\epsilon}(s,\tau)$ instead of $Q^{\chi}_{r,\epsilon}(s,\tau)$.

\section{The Minimum Distance of The Code $\C$}
This is the article's final section; in this, we will determine the minimum distance of the code $\C.$ To do so, we will use the $Q$-numbers of association schemes of symmetric matrices and the expressions for them given in Theorem \ref{thm:Qnumber}. We will determine the minimum distance of the code $\C$ for a general $t$. This will give an alternative proof of the minimum distance computation given in \cite{BJS}. Recall that from \eqref{eq:Hammingweight1}, corresponding to every codeword $c_f\in\C$ one can associate a symmetric matrix $B_f$ such that 
$$
\wt(c_f) =\left|\{A\in\Xleqt: \mathrm{Tr}(B_f.A)\neq 0\}\right|.
$$
We also mentioned that it has been proved  \cite[Proposition 3.3]{BJS}, that corresponding to every codeword $c_f\in \C$, one can associate a diagonal matrix $G=\diag(1,1,\dots,1, \delta,0\dots, 0)$ of the same rank as of rank $B_f$ such that 
 $$
 \wt (c_f) = \left|\{A\in\Xleqt: \mathrm{Tr}(GA)\neq 0\}\right|.
 $$

Moreover, we will also show that the Hamming weight of codeword $c_f$ depends only on the type of matrix $B_f$. In other words, if the rank of $B_f$ is odd, then the weight $\wt(c_f)$ does also not depend on $\delta $ defined above. But before proceeding any further, note that there is a one-to-one correspondence between the codewords $c_f\in \C$ and symmetric matrices $B=(b_{ij})\in\X$. For example, corresponding to $B=(b_{ij})\in\X$, one can associate $f({\bf x})=\sum\limits_{i\leq j} b_{ij}x_{ij}\in \Fq[{\bf X}]_1$ and $c_B\in\C$ be the corresponding codeword.  Let us define the Hamming $\wt(B)$ as the codeword $c_B.$ For any $B\in\X$ and $0\leq r\leq m$, we define the { \it restricted weight}
\begin{equation}
    \label{eq:restweight}
    \mathrm{w}_B(r,m)= \left|\{A\in\X_r: \mathrm{Tr}(BA)\neq 0\}\right|.
\end{equation}
 Clearly, the Hamming weight of the codeword $c_B$ is the sum of $\mathrm{w}_B(r,m)$, i.e.,
\begin{equation}
   \label{eq:Wwrel}  
    \wt(c_B)= \sum\limits_{r=1}^t \mathrm{w}_B(r,m).
\end{equation}

In the next lemma, we establish a connection between the $Q$-numbers and the restricted weights
\begin{lemma}\label{lem:charsumcount}
For a matrix $B\in\X_{k,\tau}$,
$$\left|\{A \in \X_{r,\epsilon} \mid \mathrm{tr}(A \cdot B)=0 \}\right|=\frac{1}{q}\sum_{\chi} Q_{r,\epsilon}(k,\tau)$$
\end{lemma}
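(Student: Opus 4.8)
~\textbf{Proof plan.}
The statement counts the matrices $A \in \X_{r,\epsilon}$ lying on the hyperplane $\mathrm{Tr}(A\cdot B)=0$, and the natural tool is the character-sum orthogonality relation~\eqref{eq:charsum}. The plan is to start from the indicator identity: for any $x \in \fq$, the inner sum $\sum_{\chi_\alpha}\chi_\alpha(x)$ equals $q$ when $x=0$ and $0$ otherwise, where the sum runs over all $q$ characters of $(\fq,+)$. Applying this with $x = \mathrm{Tr}(A\cdot B)$ and summing over all $A \in \X_{r,\epsilon}$, I would write
$$
q\cdot\left|\{A \in \X_{r,\epsilon} \mid \mathrm{Tr}(A\cdot B)=0\}\right|
=\sum_{A \in \X_{r,\epsilon}}\ \sum_{\chi}\chi(\mathrm{Tr}(A\cdot B)).
$$
Here the outer sum picks out exactly the matrices $A$ for which the trace vanishes (each contributes $q$), while every other $A$ contributes $0$.

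Next I would interchange the order of summation, grouping by the character $\chi$ first:
$$
\sum_{A \in \X_{r,\epsilon}}\sum_{\chi}\chi(\mathrm{Tr}(A\cdot B))
=\sum_{\chi}\ \sum_{A \in \X_{r,\epsilon}}\langle A,B\rangle,
$$
recalling the definition $\langle A,B\rangle=\chi(\mathrm{Tr}(A\cdot B))$. The inner sum $\sum_{A\in\X_{r,\epsilon}}\langle A,B\rangle$ is, by the definition of the $Q$-numbers (using that one may exchange the roles of $A$ and $B$ since $\mathrm{Tr}(A\cdot B)=\mathrm{Tr}(B\cdot A)$, and that $B\in\X_{k,\tau}$), precisely $Q^{\chi}_{r,\epsilon}(k,\tau)$. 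Thus the right-hand side becomes $\sum_{\chi}Q^{\chi}_{r,\epsilon}(k,\tau)$, and dividing by $q$ yields the claimed formula.

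The only subtlety I would be careful about is the treatment of the trivial character $\chi_0$ in the sum: the identity~\eqref{eq:charsum} ranges over all characters including $\chi_0$, so the sum $\sum_\chi$ in the final formula must likewise include the trivial one, for which $Q^{\chi_0}_{r,\epsilon}(k,\tau)=|\X_{r,\epsilon}|=\mu_{r,\epsilon}(m)$ as noted after the definition. This is consistent with the notation in the lemma, where the abbreviated $Q_{r,\epsilon}(k,\tau)$ (without the $\chi$ superscript) denotes the common value over nontrivial characters, so I would simply make sure the statement is read with the convention that $\sum_\chi$ runs over all $q$ characters and the trivial term is evaluated separately. No genuine obstacle arises here; the argument is a direct application of Fourier inversion on $(\fq,+)$, and the main thing to get right is bookkeeping of which characters are included and invoking the definition of the $Q$-numbers at the correct point.
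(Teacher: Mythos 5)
Your proof is correct and is essentially identical to the paper's: both expand $\sum_{\chi} Q^{\chi}_{r,\epsilon}(k,\tau)$ by definition, swap the order of summation, and apply the orthogonality relation \eqref{eq:charsum} to pick out the matrices $A$ with $\mathrm{Tr}(A\cdot B)=0$. Your remark about the sum running over all $q$ characters (with the trivial one contributing $|\X_{r,\epsilon}|$) is a sound bit of bookkeeping that the paper glosses over, but it does not change the argument.
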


\begin{proof}
Let $\chi$ be a nontrivial character of $(\Fq,+)$, then we have
\begin{eqnarray*}
\sum_{\chi} Q_{r,\epsilon}(k,\tau) & = & \sum_{\chi} \sum_{A \in X_{r,\epsilon}}\langle A, B \rangle\\
 & = & \sum_{A \in X_{r,\epsilon}}\sum_{\chi} \chi(\mathrm{Tr}(AB)) \\
 & = & \sum_{\substack{A \in X_{r,\epsilon}\\\mathrm{Tr}(BA)=0 }} q \qquad\qquad\qquad\quad\quad(\text{  by equation \eqref{eq:charsum} }).
\end{eqnarray*}
The result follows now.
\end{proof}
 The next corollary is a simple consequence of the above Lemma. 
 \begin{corollary}
     \label{cor:charsumcount}
      Let $B\in\X_{k,\tau}$, and let $c_B\in\C$ be the corresponding codeword. Then 
      $$
       \mathrm{w}_B(r,m) =\left| \X_{r}\right| - \frac{1}{q}\sum_{\epsilon}\sum_{\chi} Q_{r,\epsilon}(k,\tau).
      $$
 \end{corollary}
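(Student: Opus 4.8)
The plan is to derive Corollary~\ref{cor:charsumcount} directly from Lemma~\ref{lem:charsumcount} by summing over the two types $\epsilon\in\{-1,1\}$ and rewriting the complementary count. Starting from the definition of the restricted weight in \eqref{eq:restweight}, I would split $\X_r$ into its type strata $\X_{r,\epsilon}$ and count, within each stratum, the matrices $A$ for which $\mathrm{Tr}(BA)\neq 0$ as the complement of those for which the trace vanishes. Concretely, for each $\epsilon$ one has
$$
\left|\{A\in\X_{r,\epsilon}:\mathrm{Tr}(BA)\neq 0\}\right| = |\X_{r,\epsilon}| - \left|\{A\in\X_{r,\epsilon}:\mathrm{Tr}(BA)=0\}\right|.
$$
Summing this identity over $\epsilon$ gives $\mathrm{w}_B(r,m)$ on the left (since $\X_r=\X_{r,1}\sqcup\X_{r,-1}$ and the traces partition accordingly), and $\sum_\epsilon|\X_{r,\epsilon}|=|\X_r|$ for the first term on the right.

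For the second term, I would substitute the count of trace-zero matrices provided by Lemma~\ref{lem:charsumcount}, namely $\left|\{A\in\X_{r,\epsilon}:\mathrm{Tr}(A\cdot B)=0\}\right|=\frac{1}{q}\sum_\chi Q_{r,\epsilon}(k,\tau)$, valid since $B\in\X_{k,\tau}$. Summing over $\epsilon$ then yields exactly the double sum $\frac{1}{q}\sum_\epsilon\sum_\chi Q_{r,\epsilon}(k,\tau)$ appearing in the statement. Combining the two pieces produces
$$
\mathrm{w}_B(r,m)=|\X_r|-\frac{1}{q}\sum_\epsilon\sum_\chi Q_{r,\epsilon}(k,\tau),
$$
which is the claim.

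There is essentially no genuine obstacle here: the corollary is a bookkeeping consequence of the lemma, and the only points requiring a little care are routine. First, one must make sure the type decomposition $\X_r=\bigsqcup_\epsilon\X_{r,\epsilon}$ is applied correctly, keeping in mind the convention already fixed in the preliminaries that when $r$ is odd there is only one type (so the sum over $\epsilon$ degenerates to a single term and the notation $Q_{r,\epsilon}$ collapses to $Q_r$); the argument goes through verbatim in that case. Second, the appeal to Lemma~\ref{lem:charsumcount} is legitimate because $B$ lies in a fixed stratum $\X_{k,\tau}$, so the lemma's hypothesis is met for every $r$ and $\epsilon$. The hardest part, if any, is merely ensuring these conventions and the swap between counting the nonzero-trace set and its complement are stated cleanly, but no new idea beyond the lemma is needed.
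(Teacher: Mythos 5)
Your proposal is correct and follows essentially the same route as the paper: decompose $\X_r$ into the type strata $\X_{r,\epsilon}$, pass to the complement of the trace-zero set in each stratum, and substitute the count from Lemma \ref{lem:charsumcount} before summing over $\epsilon$. Your remarks on the degenerate case of odd $r$ and on the applicability of the lemma's hypothesis are accurate but add nothing beyond the paper's own computation.
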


\begin{proof}
Let  $B\in\X_{k,\tau}$, and let $c_B\in\C$ be the corresponding codeword. Then 
  \begin{eqnarray*} 
\mathrm{w}_B(r,m) & =& \left|\{A\in\X_r: \mathrm{Tr}(BA)\neq 0\}\right|\\
& = & \sum_{\epsilon} \left|\{A\in\X_{r, \epsilon}: \mathrm{Tr}(BA)\neq 0\}\right|\\
& = & \left| \X_{r}\right| - \sum_{\epsilon} \left|\{A\in\X_{r, \epsilon}: \mathrm{Tr}(BA) = 0\}\right|\\ 
& =& \left| \X_{r}\right| - \frac{1}{q}\sum_{\epsilon}\sum_{\chi} Q_{r,\epsilon}(k,\tau),
  \end{eqnarray*}
  where the last equality follows from the Lemma \ref{lem:charsumcount}.
\end{proof}
The next corollary implies that the restricted weights $\mathrm{w}_B(r,m)$ depend only on the type of the matrix $B$.

\begin{corollary}
     \label{cor:charsumcount2}
      Let $B\in\X_{k,\tau}$, and let $c_B\in\C$ be the corresponding codeword. Then, the restricted weight $\mathrm{w}_B(r,m)$  of the codeword $c_B$ depends only on $k$ and $\tau. $  Consequently, the Hamming weight $\wt(c_B)$ depends only on the rank $k$ and type $\tau$ of $B$.

\end{corollary}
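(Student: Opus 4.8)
The plan is to read the statement off directly from the closed formula of Corollary \ref{cor:charsumcount},
$$\mathrm{w}_B(r,m)=|\X_r|-\frac{1}{q}\sum_{\epsilon}\sum_{\chi}Q^{\chi}_{r,\epsilon}(k,\tau),$$
where the inner sum runs over all $q$ characters $\chi$ of $(\fq,+)$. It suffices to show that every term on the right-hand side is a function of $r$, $k$, $\tau$ (and the fixed $m$) only, with no residual dependence on the particular matrix $B\in\X_{k,\tau}$.

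First I would note that $|\X_r|=\mu_m(r)$ depends only on $r$ and $m$, so the whole content lies in the double sum. I would split the character sum into the trivial character $\chi_0$ and the $q-1$ nontrivial characters. For $\chi_0$ one has $\langle A,B \rangle=1$ for every $A$, hence $Q^{\chi_0}_{r,\epsilon}(k,\tau)=|\X_{r,\epsilon}|=\mu_{r,\epsilon}(m)$, which involves only $r$, $\epsilon$ and $m$. For each nontrivial character, Theorem \ref{thm:Qnumber} supplies an explicit expression for $Q^{\chi}_{r,\epsilon}(k,\tau)$ in terms of $r,\epsilon,k,\tau$ and $m$, and, decisively, guarantees that this value is the same for all nontrivial $\chi$; writing $Q_{r,\epsilon}(k,\tau)$ for that common value gives
$$\sum_{\chi}Q^{\chi}_{r,\epsilon}(k,\tau)=\mu_{r,\epsilon}(m)+(q-1)\,Q_{r,\epsilon}(k,\tau).$$
Summing this over $\epsilon\in\{1,-1\}$ and substituting into the formula for $\mathrm{w}_B(r,m)$ exhibits the restricted weight as a function of $r,k,\tau$ and $m$ alone. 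Since $r$ and $m$ are fixed, and since any $B$ of rank $k$ and type $\tau$ produces exactly these data, $\mathrm{w}_B(r,m)$ depends only on $k$ and $\tau$ (when $k$ is odd the type is forced to be $0$, so the dependence collapses to $k$, in keeping with the conventions of Theorem \ref{thm:Qnumber}).

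The claim about the Hamming weight is then immediate from \eqref{eq:Wwrel}: as $\wt(c_B)=\sum_{r=1}^{t}\mathrm{w}_B(r,m)$ is a finite sum of terms each shown to depend only on $k$ and $\tau$, so does $\wt(c_B)$. I do not anticipate a genuine obstacle, since the corollary is a bookkeeping consequence of Corollary \ref{cor:charsumcount} together with the character-independence asserted in Theorem \ref{thm:Qnumber}; the one point demanding care is to treat the trivial character separately --- it is not governed by Theorem \ref{thm:Qnumber} --- and to observe that its contribution $\mu_{r,\epsilon}(m)$ is itself independent of $B$.
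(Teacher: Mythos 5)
Your argument is correct and follows essentially the same route as the paper: both deduce the claim from Corollary~\ref{cor:charsumcount} together with the fact that the $Q$-numbers $Q^{\chi}_{r,\epsilon}(k,\tau)$ depend only on $r,\epsilon,k,\tau$ and not on the particular matrix $B$. Your version is merely more explicit --- separating the trivial character's contribution $\mu_{r,\epsilon}(m)$ and invoking the character-independence from Theorem~\ref{thm:Qnumber} --- whereas the paper relies on the independence of the $Q$-numbers from the choice of matrix stated immediately after their definition; this is a cosmetic, not substantive, difference.
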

\begin{proof}
This follows from Corollary \ref{cor:charsumcount} as $Q$-numbers $Q_{r,\epsilon}(k,\tau)$ depends on $k$ and $\tau$ and not on the matrix $B$ itself.
    \end{proof}

\begin{remark}
\label{rmk:1}
  Motivated by the above corollary, we define 
$$
\mathrm{w}_k^{\tau}(r, m) =\mathrm{w}_B(r,m) , \quad W_k^\tau(t, m) =\wt(c_B)
$$
for some matrix $B\in\X_{k, \tau}$.  Clearly, from Corollary \ref{cor:charsumcount2} we have that the number $\mathrm{w}_k^{\tau}(r, m)$ and $ W_k^\tau(t, m)$ are well defined numbers. Also, $ W_k^\tau(t, m)$ are all possible weights of codewords of $\C.$ Also, when $k$ is odd, there is only one type of symmetric matrices therefore if $k=2\ell+1$, we will write $\mathrm{w}_{2\ell+1}(r, m)$ instead of $\mathrm{w}_{2\ell+1}^{\tau}(r, m)$. Similarly, we will write $\mathrm{W}_{2\ell+1}(r, m)$ instead of $\mathrm{W}_{2\ell+1}^{\tau}(r, m)$
\end{remark}
 
 The next proposition is an important step in getting a formula for weights of the code $\C$. In this, we will give a formula for the restricted weights of the code $\C$.
 
\begin{proposition}
    \label{prop:restweight}
    For any $r$ and $\ell$, the restricted weights of the code $\C$ are given by
    $$
    \mathrm{w}_{2\ell+1}(2r+1,m)=\frac{q-1}{q}\left( \mu_{2r+1}(m)+ q^{2r}F_r^{(m-1)}(\ell)\right),
    $$
$$
\mathrm{w}_{2\ell}^\tau(2r+1,m)=\frac{q-1}{q}\left( \mu_{2r+1}(m)+q^{2r}F_r^{(m-1)}(\ell-1)-\tau q^{m-\ell+2r}F_r^{(m-2)}(\ell-1)\right),
$$
$$
\mathrm{w}_{2\ell+1}(2r,m)=\frac{q-1}{q}\left( \mu_{2r}(m)-q^{2r}F_r^{(m-1)}(\ell)\right),
$$
and
$$
\mathrm{w}_{2\ell}^\tau(2r,m)=\frac{q-1}{q}\left( \mu_{2r}(m)-q^{2r}F_r^{(m-1)}(\ell-1)+\tau q^{m-\ell+2r-2}F_{r-1}^{(m-2)}(\ell-1)\right).
$$
\end{proposition}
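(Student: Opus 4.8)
The plan is to derive all four formulas directly from Corollary \ref{cor:charsumcount}, which already expresses the restricted weight as
$$
\mathrm{w}_B(r,m)=|\X_r|-\frac{1}{q}\sum_{\epsilon}\sum_{\chi}Q_{r,\epsilon}(k,\tau).
$$
The first step is to split the inner character sum. The sum over all characters $\chi$ of $(\fq,+)$ contains the trivial character $\chi_0$ and the $q-1$ nontrivial ones. By the remark following Theorem \ref{thm:Qnumber}, every nontrivial character gives the same value $Q_{r,\epsilon}(k,\tau)$, while for the trivial character the definition gives $Q^{\chi_0}_{r,\epsilon}(k,\tau)=|\X_{r,\epsilon}|=\mu_{r,\epsilon}(m)$. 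Hence
$$
\frac{1}{q}\sum_{\chi}Q_{r,\epsilon}(k,\tau)=\frac{1}{q}\Big(\mu_{r,\epsilon}(m)+(q-1)Q_{r,\epsilon}(k,\tau)\Big),
$$
and summing over $\epsilon\in\{-1,1\}$ (or noting the single type when $r$ is odd) lets me replace $\sum_\epsilon\mu_{r,\epsilon}(m)$ by $|\X_r|=\mu_r(m)$. Substituting back, the $|\X_r|$ terms combine to give the common prefactor $\frac{q-1}{q}$, and what remains is $-\frac{q-1}{q}\sum_{\epsilon}Q_{r,\epsilon}(k,\tau)$ up to sign bookkeeping. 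This reduces each of the four cases to evaluating $\sum_\epsilon Q_{r,\epsilon}(k,\tau)$ using the explicit $Q$-number formulas.

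The second step is the case analysis according to the parities of $r$ (the rank of $A\in\X_r$) and $k$ (the rank of $B$), matching the four displays in the statement. Writing $k=2\ell+1$ or $k=2\ell$ and $r$ either $2r'+1$ or $2r'$, I invoke the appropriate line among \eqref{eq:Qoddodd}--\eqref{eq:Qeveneven}. For instance, for $\mathrm{w}_{2\ell+1}(2r+1,m)$ the relevant object is $Q_{2r+1}^{\chi}(2\ell+1)=-q^{2r}F_r^{(m-1)}(\ell)$ from \eqref{eq:Qoddodd} (here $r$ is odd so there is a single type $\epsilon$ and no sum is needed); plugging this in immediately yields $\frac{q-1}{q}(\mu_{2r+1}(m)+q^{2r}F_r^{(m-1)}(\ell))$. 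The two cases where $r$ is even require actually summing the $\epsilon$-dependent $Q$-numbers over $\epsilon=\pm1$: in \eqref{eq:Qevenodd} and \eqref{eq:Qeveneven} the term $\epsilon\frac{q^r}{2}F_r^{(m)}(s)$ is odd in $\epsilon$ and cancels, while the first bracketed term is independent of $\epsilon$ and simply doubles, so $\sum_\epsilon Q_{2r,\epsilon}=q^{2r}F_r^{(m-1)}(\cdot)\mp\tau q^{\cdots}F^{(m-2)}(\cdot)$; this cancellation is exactly what produces the clean two-term expressions for $r$ even.

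The only genuine care is index and argument bookkeeping: the $Q$-number formulas are written with internal arguments like $s$ and $s-1$ and with the dimension superscripts $(m-1),(m-2)$ shifting, so I must match $k=2\ell$ against ``$2s$'' with $s=\ell$ (producing the $F(\ell-1)$ arguments visible in the even-$k$ displays) and $k=2\ell+1$ against ``$2s+1$'' with $s=\ell$. I expect the main obstacle to be nothing deep but purely this translation between the $(s,\tau)$ notation of Theorem \ref{thm:Qnumber} and the $(\ell,\tau)$ notation of the proposition, together with confirming that the trivial-character contribution $\mu_{r,\epsilon}(m)$ sums correctly to $\mu_r(m)$ in each parity case and that no stray factor of $2$ survives the $\epsilon$-averaging. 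Once the substitution is carried out line by line, each of the four identities drops out directly, so I would present the odd-$r$ cases briefly and spell out the $\epsilon$-cancellation once to justify the even-$r$ cases.
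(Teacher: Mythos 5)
Your proposal is correct and follows essentially the same route as the paper's own proof: apply Corollary \ref{cor:charsumcount}, separate the trivial character (contributing $\frac{1}{q}\mu_r(m)$) from the $q-1$ identical nontrivial-character contributions, observe that the $\epsilon$-odd term $\epsilon\frac{q^r}{2}F_r^{(m)}(s)$ cancels when summing over types in the even-rank cases, and substitute \eqref{eq:Qoddodd}--\eqref{eq:Qeveneven}. No gaps; only the minor bookkeeping you already anticipate.
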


\begin{proof}
 Recall that there are exactly $q$ characters of $(\Fq, +)$.  Among them, one character is trivial, and for all nontrivial characters, we have seen in Theorem \ref{thm:Qnumber} that the $Q$ numbers are the same.  Also, from Corollary \ref{cor:charsumcount2}, and the fact that for odd rank, there is only one type of symmetric matrices, we have    
 \begin{eqnarray*} 
\mathrm{w}_{2\ell+1}(2r+1, m) & =& \left| \X_{2r+1}\right| - \frac{1}{q}\sum_{\epsilon}\sum_{\chi} Q_{2r+1,\epsilon}(2\ell+1,\tau)\\
& = &\mu_{2r+1}(m) - \frac{1}{q}\sum_{\chi} Q_{2r+1}(2\ell+1)\\
& = &\mu_{2r+1}(m) - \frac{1}{q}Q_{2r+1}^{0}(2\ell+1)-\frac{1}{q}\sum_{\chi\neq 0} Q_{2r+1}(2\ell+1)\\
& = &\mu_{2r+1}(m) - \frac{1}{q}\mu_{2r+1}(m)-\frac{q-1}{q} Q_{2r+1}(2\ell+1)\\
& = &\dfrac{q-1}{q}\left(\mu_{2r+1}(m) +\; q^{2r}F_r^{(m-1)}(\ell)\right)
  \end{eqnarray*}
  where the last equality follows from the equation \eqref{eq:Qoddodd}. Similarly,
  \begin{eqnarray*} 
\mathrm{w}_{2\ell}^\tau(2r+1, m) & =& \left| \X_{2r+1}\right| - \frac{1}{q}\sum_{\epsilon}\sum_{\chi} Q_{2r+1,\epsilon}(2\ell+1,\tau)\\
& = &\mu_{2r+1}(m) - \frac{1}{q}\sum_{\chi} Q_{2r+1}(2\ell, \tau)\\
& = &\mu_{2r+1}(m) - \frac{1}{q}Q_{2r+1}^{0}(2\ell, \tau)-\frac{1}{q}\sum_{\chi\neq 0} Q_{2r+1}(2\ell, \tau)\\
& = &\mu_{2r+1}(m) - \frac{1}{q}\mu_{2r+1}(m)-\frac{q-1}{q} Q_{2r+1}(2\ell, \tau)\\
& = &\dfrac{q-1}{q}\left(\mu_{2r+1}(m)+q^{2r}F_r^{(m-1)}(\ell-1)-\tau q^{m-\ell+2r}F_r^{(m-2)}(\ell-1)\right)
  \end{eqnarray*}
  where again the last equality follows from the equation \eqref{eq:Qoddeven}. In the same way, 
  \begin{eqnarray*} 
\mathrm{w}_{2\ell+1}(2r, m) & =& \left| \X_{2r}\right| - \frac{1}{q}\sum_{\epsilon}\sum_{\chi} Q_{2r,\epsilon}(2\ell+1)\\
& = &\mu_{2r}(m) - \frac{1}{q}\sum_{\chi} Q_{2r, 1}(2\ell+1) -\frac{1}{q}\sum_{\chi} Q_{2r,-1}(2\ell+1)\\
& = &\mu_{2r}(m) - \frac{1}{q}Q_{2r,1}^{0}(2\ell+1)-\frac{1}{q}\sum_{\chi\neq 0} Q_{2r, 1}(2\ell+1)\\
&  - &\frac{1}{q}Q_{2r, -1}^{0}(2\ell+1)-\frac{1}{q}\sum_{\chi\neq 0} Q_{2r, -1}(2\ell+1)\\
& = &\mu_{2r}(m) - \frac{1}{q}\mu_{2r, 1}(m)-\frac{q-1}{q} Q_{2r, 1}(2\ell+1)\\
&-& \frac{1}{q}\mu_{2r, -1}(m)-\frac{q-1}{q} Q_{2r, -1}(2\ell+1)\\
& = &\mu_{2r}(m) - \frac{1}{q}\mu_{2r}(m)-\frac{q-1}{q} \left(Q_{2r, 1}(2\ell+1) +Q_{2r, -1}(2\ell+1)\right)\\
& = &\dfrac{q-1}{q}\left(\mu_{2r}(m) -\; q^{2r}F_r^{(m-1)}(\ell)\right)
  \end{eqnarray*}
  where again the last equality follows from the equation \eqref{eq:Qevenodd}. Finally,
  
\begin{eqnarray*} 
\mathrm{w}_{2\ell}^\tau(2r, m) & =& \left| \X_{2r}\right| - \frac{1}{q}\sum_{\epsilon}\sum_{\chi} Q_{2r,\epsilon}(2\ell, \tau)\\
& = &\mu_{2r}(m) - \frac{1}{q}\sum_{\chi} Q_{2r, 1}(2\ell,\tau) -\frac{1}{q}\sum_{\chi} Q_{2r, 1}(2\ell, \tau)\\
& = &\mu_{2r}(m) - \frac{1}{q}Q_{2r,1}^{0}(2\ell,\tau)-\frac{1}{q}\sum_{\chi\neq 0} Q_{2r, 1}(2\ell,\tau)\\
&  - &\frac{1}{q}Q_{2r, -1}^{0}(2\ell,\tau)-\frac{1}{q}\sum_{\chi\neq 0} Q_{2r, -1}(2\ell,\tau)\\
& = &\mu_{2r}(m) - \frac{1}{q}\mu_{2r, 1}(m)-\frac{q-1}{q} Q_{2r, 1}(2\ell,\tau)\\
&-& \frac{1}{q}\mu_{2r, -1}(m)-\frac{q-1}{q} Q_{2r, -1}(2\ell,\tau)\\
& = &\mu_{2r}(m) - \frac{1}{q}\mu_{2r}(m)-\frac{q-1}{q} \left(Q_{2r, 1}(2\ell,\tau) +Q_{2r, -1}(2\ell,\tau)\right)\\
& = &\dfrac{q-1}{q}\left( \mu_{2r}(m)-q^{2r}F_r^{(m-1)}(\ell-1)+\tau q^{m-\ell+2r-2}F_{r-1}^{(m-2)}(\ell-1)\right).
  \end{eqnarray*}
  where the last equality follows from the equation \eqref{eq:Qeveneven}.
\end{proof}

The next two theorems are among the main results of this article. In these, we give an explicit formula for all possible weights of the code $\C$.

\begin{theorem}
    \label{thm:Weven}
    Let $0\leq t\leq n=\lfloor m/2\rfloor$. All possible weights of code $\Ceven$ are 
    $$
W_{2\ell+1}(2t,m)=\dfrac{q-1}{q}\sum\limits_{s=0}^{2t}\mu_s(m) -\dfrac{q-1}{q} q^{2t} F_t^{(m-1)}(\ell)
$$
and
$$
W_{2\ell}^\tau(2t, m)=\dfrac{q-1}{q}\sum\limits_{s=0}^{2t}\mu_s(m) -\dfrac{q-1}{q} q^{2t} F_t^{(m-1)}(\ell-1).
$$
\end{theorem}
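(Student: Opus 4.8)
The plan is to compute both weights directly from \eqref{eq:Wwrel}, which expresses each Hamming weight as a telescoping-friendly sum of restricted weights, $\wt(c_B) = \sum_{r=1}^{2t}\mathrm{w}_B(r,m)$, and then to substitute the four closed forms from Proposition \ref{prop:restweight}. Fix $B \in \X_{2\ell+1}$ (respectively $B \in \X_{2\ell,\tau}$) so that $W_{2\ell+1}(2t,m)$ (respectively $W_{2\ell}^\tau(2t,m)$) equals this sum by Remark \ref{rmk:1}. First I would split the index range $1 \le r \le 2t$ into odd ranks $r = 2j+1$ with $0 \le j \le t-1$ and even ranks $r = 2j$ with $1 \le j \le t$, so that the single sum becomes two sums to which the matching formulas from Proposition \ref{prop:restweight} apply.

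After the substitution the expression splits into contributions of three kinds, handled separately. The $\mu$-contributions simply add up, $\sum_{j=0}^{t-1}\mu_{2j+1}(m) + \sum_{j=1}^{t}\mu_{2j}(m) = \sum_{s=1}^{2t}\mu_s(m)$, accounting for all ranks from $1$ to $2t$. The decisive step is the treatment of the terms carrying the factor $q^{2j}F_j^{(m-1)}$: in both cases they enter with opposite signs for odd and even ranks, so the two sums telescope, leaving only the boundary terms $F_0^{(m-1)}(\cdot) - q^{2t}F_t^{(m-1)}(\cdot)$. From the definition \eqref{eq:Fdef} one checks directly that $F_0^{(m-1)}(s) = 1$ for every $s$ (only the $j=0$ summand survives). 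This isolated $+1$, combined with $\mu_0(m)=1$, is precisely what promotes the $\mu$-sum from $\sum_{s=1}^{2t}$ to $\sum_{s=0}^{2t}$, producing the term $\frac{q-1}{q}\sum_{s=0}^{2t}\mu_s(m)$ together with the leading term $-\frac{q-1}{q}q^{2t}F_t^{(m-1)}(\cdot)$. For $B\in\X_{2\ell+1}$ (argument $\ell$) this already completes the first formula.

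For the even-rank matrix $B$ (argument $\ell-1$) there is one further family of type-dependent terms, carrying the factor $\tau$ and involving $F^{(m-2)}(\ell-1)$, and the remaining obstacle is to show they cancel. I would reindex the even-rank contribution $\tau\sum_{j=1}^{t} q^{m-\ell+2j-2}F_{j-1}^{(m-2)}(\ell-1)$ by setting $i = j-1$; it becomes $\tau\sum_{i=0}^{t-1} q^{m-\ell+2i}F_{i}^{(m-2)}(\ell-1)$, which matches term-by-term the odd-rank contribution $-\tau\sum_{j=0}^{t-1} q^{m-\ell+2j}F_{j}^{(m-2)}(\ell-1)$, so their sum vanishes. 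This is the single point where the shift in the $F$-superscript and in the summation index are arranged to cancel exactly, and it is the only step requiring care beyond routine bookkeeping. With the $\tau$-terms gone, the even-rank computation reduces to the same telescoping pattern as before (now with argument $\ell-1$), and assembling the three pieces yields the stated formula for $W_{2\ell}^\tau(2t,m)$; in particular the independence of the right-hand side from $\tau$ is an immediate byproduct of this cancellation.
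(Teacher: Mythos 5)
Your proposal is correct and follows essentially the same route as the paper: expand the weight via \eqref{eq:Wwrel}, substitute the restricted-weight formulas of Proposition \ref{prop:restweight}, and let the $q^{2j}F_j^{(m-1)}$ terms telescope and the $\tau$-terms cancel after the index shift $i=j-1$. The only cosmetic difference is that you start the sum at $r=1$ and recover the $\mu_0(m)=1$ term from the boundary value $F_0^{(m-1)}(\cdot)=1$, whereas the paper starts at $r=0$ so that this term is absorbed automatically.
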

\begin{proof}
Suppose $0\leq \ell\leq \lfloor(m-1)/2\rfloor$. Then we know from Remark \ref{rmk:1} that the weight $W_{2\ell+1}^\tau(2t, m)$ is independent of $\tau.$ From equation \eqref{eq:Wwrel}, we have
$$
W_{2\ell+1}(2t,m) =\sum\limits_{s=0}^{2t} w_{2\ell+1}(s, m)
$$
Using Proposition  \ref{prop:restweight}, we find
\begin{eqnarray*}
W_{2\ell+1}(2t,m) &=&\sum\limits_{s=0}^{2t} \mathrm{w}_{2\ell+1}(s, m)\\
&=& \sum\limits_{s=0}^{t}  \mathrm{w}_{2\ell+1}(2s, m) + \sum\limits_{s=0}^{t-1} \mathrm{w}_{2\ell+1}(2s+1, m)\\
&=&\dfrac{q-1}{q}\left(  \sum\limits_{s=0}^{t} \mu_{2s}(m) - q^{2s} F_s^{(m-1)}(\ell)    \right)\\
&+& \dfrac{q-1}{q}\left(  \sum\limits_{s=0}^{t-1} \mu_{2s+1}(m) + q^{2s} F_s^{(m-1)}(\ell)    \right)\\
&=& \dfrac{q-1}{q}\sum\limits_{s=0}^{2t}\mu_s(m) -\dfrac{q-1}{q} q^{2t} F_t^{(m-1)}(\ell).
\end{eqnarray*}

On the other hand, if $0\leq \ell\leq \lfloor m/2\rfloor$, then again using equation \eqref{eq:Wwrel}, we have
$$
W_{2\ell}^\tau(2t,m) =\sum\limits_{s=0}^{2t} \mathrm{w}_{2\ell}^\tau(s,m).
$$

 With the help of Proposition  \ref{prop:restweight}, we get
 \begin{eqnarray*}
W_{2\ell}^\tau(2t,m) &=&\sum\limits_{s=0}^{2t} \mathrm{w}_{2\ell}^\tau(s, m)\\
&=& \sum\limits_{s=0}^{t}  \mathrm{w}_{2\ell}^\tau(2s, m) + \sum\limits_{s=0}^{t-1} \mathrm{w}_{2\ell}^\tau(2s+1, m)\\
&=&\dfrac{q-1}{q}\left(  \sum\limits_{s=0}^{t} \mu_{2s}(m)-q^{2s}F_s^{(m-1)}(\ell-1)+\tau q^{m-\ell+2s-2}F_{s-1}^{(m-2)}(\ell-1)  \right)\\
&+& \dfrac{q-1}{q}\left(  \sum\limits_{s=0}^{t-1} \mu_{2s+1}(m) + q^{2s}F_s^{(m-1)}(\ell-1)-\tau q^{m-\ell+2s}F_{s-1}^{(m-2)}(\ell-1)  \right)\\
&=&\dfrac{q-1}{q}\sum\limits_{s=0}^{2t}\mu_s(m) -\dfrac{q-1}{q} q^{2t} F_t^{(m-1)}(\ell-1).
\end{eqnarray*}
This completes the proof of the theorem. 

\end{proof}
In \cite{BJS}, we have computed the minimum distance of the code $\Ceven$. Furthermore, in \cite{BJS1}, we have given a table of weight distribution of code $\C$ for small values of $m$. This table was computed in SAGE. In the table, one can easily make some observations. For example,  the weight $W^\tau_k(2t, m)$ does not depend on $\tau$. In the next two corollaries, we will prove other interesting results for the code $\Ceven$.

\begin{corollary}
    \label{cor:Weven1}
    For the code $\Ceven$, the weights $W_k^\tau(2t, m)$ are independent of $\tau$
\end{corollary}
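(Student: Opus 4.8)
The plan is to read off the statement directly from the explicit weight formulas already established in Theorem \ref{thm:Weven}, so that essentially all of the work has been done. First I would dispose of the odd-rank case: by the convention recorded in Remark \ref{rmk:1}, symmetric matrices of odd rank $k=2\ell+1$ carry only a single type, so $W_{2\ell+1}(2t,m)$ is well defined with no reference to $\tau$ and there is nothing to prove. Hence the only genuine case is even rank $k=2\ell$.

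For $k=2\ell$ I would simply invoke the second formula of Theorem \ref{thm:Weven}, namely
$$
W_{2\ell}^\tau(2t,m)=\frac{q-1}{q}\sum_{s=0}^{2t}\mu_s(m)-\frac{q-1}{q}\,q^{2t}F_t^{(m-1)}(\ell-1),
$$
whose right-hand side contains no occurrence of $\tau$. This already yields that $W_{2\ell}^\tau(2t,m)$ is independent of $\tau$, which is exactly the assertion of the corollary.

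The one conceptual point worth underlining — and the only place where a brief verification is genuinely needed — is \emph{why} the $\tau$ disappears, since the individual restricted weights in Proposition \ref{prop:restweight} do depend on $\tau$. I would trace this to an exact cancellation inside the sum $W_{2\ell}^\tau(2t,m)=\sum_{s=0}^{t}\mathrm{w}_{2\ell}^\tau(2s,m)+\sum_{s=0}^{t-1}\mathrm{w}_{2\ell}^\tau(2s+1,m)$. The $\tau$-terms from the even-rank contributions amount to $\frac{q-1}{q}\sum_{s=0}^{t}\tau\,q^{m-\ell+2s-2}F_{s-1}^{(m-2)}(\ell-1)$, while those from the odd-rank contributions amount to $-\frac{q-1}{q}\sum_{s=0}^{t-1}\tau\,q^{m-\ell+2s}F_{s}^{(m-2)}(\ell-1)$. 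Using the convention $F_{-1}^{(m-2)}=0$ to drop the $s=0$ term of the first sum and then reindexing it by $s\mapsto s+1$, the two sums become identical up to sign and cancel termwise; this is precisely the telescoping already carried out in the proof of Theorem \ref{thm:Weven}.

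I do not anticipate any real obstacle. Once Theorem \ref{thm:Weven} is available the corollary is immediate, and the only substantive ingredient — the termwise cancellation of the $\tau$-contributions — is an elementary reindexing relying solely on the shape of the formulas in Proposition \ref{prop:restweight} together with the vanishing of the generalized Krawtchouk functions at negative lower index.
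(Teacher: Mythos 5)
Your proposal is correct and follows exactly the paper's own route: the odd-rank case is vacuous because there is only one type, and the even-rank case is read off from the $\tau$-free formula for $W_{2\ell}^\tau(2t,m)$ in Theorem \ref{thm:Weven}. Your added remark tracing the disappearance of $\tau$ to the telescoping cancellation (using $F_{-1}^{(m-2)}=0$) is a correct elaboration of what already happens inside the proof of Theorem \ref{thm:Weven}, not a new ingredient.
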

\begin{proof}
    When $k$ is odd, we already have discussed that the weight $W_k^\tau(t, m)$ does not depend on $\tau$ whether $t$ is odd or even. Now the $W_{2\ell}^\tau(2t, m)$ are independent of $\tau$ and the result follows from the expression of $W_{2\ell}^\tau(2t, m)$ in Theorem \ref{thm:Weven}.
\end{proof}
    \begin{corollary}
    \label{cor:Weven2}
    For the code $\Ceven$ and $1\leq \ell \leq \lfloor m/2\rfloor$, the following holds
    $$
    W_{2\ell-1}(2t, m) =W_{2\ell}(2t, m)
    $$
\end{corollary}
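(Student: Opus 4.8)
The plan is to read both sides of the claimed identity directly off the explicit weight formulas in Theorem \ref{thm:Weven} and observe that they coincide after an index shift. Since the left-hand side $W_{2\ell-1}(2t,m)$ carries an odd index, I would write $2\ell-1 = 2(\ell-1)+1$ and apply the first formula of Theorem \ref{thm:Weven} with $\ell$ replaced by $\ell-1$. This gives
$$
W_{2\ell-1}(2t,m) = \frac{q-1}{q}\sum_{s=0}^{2t}\mu_s(m) - \frac{q-1}{q}\,q^{2t}F_t^{(m-1)}(\ell-1).
$$

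For the right-hand side I would apply the second formula of Theorem \ref{thm:Weven} verbatim, which yields
$$
W_{2\ell}^\tau(2t,m) = \frac{q-1}{q}\sum_{s=0}^{2t}\mu_s(m) - \frac{q-1}{q}\,q^{2t}F_t^{(m-1)}(\ell-1).
$$
By Corollary \ref{cor:Weven1} this weight does not depend on $\tau$, so I may suppress the superscript and write $W_{2\ell}(2t,m)$. Comparing the two displays, both equal the same quantity, and the identity $W_{2\ell-1}(2t,m)=W_{2\ell}(2t,m)$ follows at once.

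There is essentially no analytic obstacle here: the whole content is carried by Theorem \ref{thm:Weven}, and the corollary amounts to the bookkeeping observation that the odd-rank weight, with its argument shifted down by one, matches the even-rank weight. The only step requiring a moment of care is verifying that the index shift keeps $\ell-1$ inside the range $0\le \ell-1\le \lfloor(m-1)/2\rfloor$ demanded by the odd formula; this holds throughout the stated range $1\le \ell\le \lfloor m/2\rfloor$ because $\lfloor(m-1)/2\rfloor+1\ge \lfloor m/2\rfloor$ for every $m$, as one checks separately for $m$ even and $m$ odd. Hence the stated equality is valid on the full range of $\ell$.
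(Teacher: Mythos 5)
Your proposal is correct and is essentially the paper's own argument: the paper likewise derives the identity by reading $W_{2\ell-1}=W_{2(\ell-1)+1}$ and $W_{2\ell}^\tau$ off the two formulas of Theorem \ref{thm:Weven} and noting both equal $\frac{q-1}{q}\sum_{s=0}^{2t}\mu_s(m)-\frac{q-1}{q}q^{2t}F_t^{(m-1)}(\ell-1)$. Your extra verification that the shifted index $\ell-1$ stays in the admissible range is a small added care the paper leaves implicit.
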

\begin{proof}
    It follows immediately from the formula for $W_{2\ell}(2t, ,m)$ and $W_{2\ell+1}(2t, ,m)$ in Theorem \ref{thm:Weven}.
\end{proof}
In the next theorem we get the weight distribution of the code $\C$ for odd $t$.

\begin{theorem}
    \label{thm:Wodd}
    Let $0\leq t\leq n=\lfloor (m-1)/2\rfloor$. All possible weights of code $\Codd$ are 
$$
W_{2\ell+1}(2t+1,m)=\frac{q-1}{q}\sum_{r=0}^{2t+1}\mu_r(m).
$$
and
$$W_{2\ell}^\tau(2t+1,m)=W_{1}(2t+1,m)-\tau \frac{q-1}{q}\left(q^{m-\ell+2t}F_t^{(m-2)}(\ell-1)\right)$$.
\end{theorem}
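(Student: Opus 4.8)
The plan is to mirror the computation carried out for Theorem~\ref{thm:Weven}, relying on equation~\eqref{eq:Wwrel} together with the restricted-weight formulas of Proposition~\ref{prop:restweight}. First I would write each total weight as the sum of its restricted weights over all ranks from $0$ to $2t+1$, and then split this sum according to parity, as $\sum_{s=0}^{t}\mathrm{w}(2s,m)+\sum_{s=0}^{t}\mathrm{w}(2s+1,m)$. For the codeword of odd rank $2\ell+1$, type plays no role, and substituting the two relevant expressions from Proposition~\ref{prop:restweight}---namely $\mathrm{w}_{2\ell+1}(2r,m)$ and $\mathrm{w}_{2\ell+1}(2r+1,m)$---one sees that the terms $\pm q^{2s}F_{s}^{(m-1)}(\ell)$ appear with opposite signs in the even and odd sums and cancel in pairs. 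What remains is exactly $\frac{q-1}{q}\sum_{r=0}^{2t+1}\mu_r(m)$, which is the claimed value of $W_{2\ell+1}(2t+1,m)$ and, taking $\ell=0$, equals $W_{1}(2t+1,m)$.

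The second formula requires more care. For the codeword of even rank $2\ell$ and type $\tau$, I would substitute $\mathrm{w}_{2\ell}^\tau(2r,m)$ and $\mathrm{w}_{2\ell}^\tau(2r+1,m)$ from Proposition~\ref{prop:restweight}. As before, the $\mu$-terms assemble into $\sum_{r=0}^{2t+1}\mu_r(m)$ and the two copies of $q^{2s}F_{s}^{(m-1)}(\ell-1)$ cancel. The surviving contribution is the $\tau$-dependent sum
$$\tau\,\frac{q-1}{q}\left(\sum_{s=0}^{t} q^{m-\ell+2s-2}F_{s-1}^{(m-2)}(\ell-1)-\sum_{s=0}^{t} q^{m-\ell+2s}F_{s}^{(m-2)}(\ell-1)\right).$$
The key observation is that if one sets $a_{s}=q^{m-\ell+2s}F_{s}^{(m-2)}(\ell-1)$, then the first summand equals $a_{s-1}$, so the bracket becomes the telescoping difference $\sum_{s=0}^{t}(a_{s-1}-a_{s})$. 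Using the convention that $F_{-1}^{(m-2)}(\ell-1)=0$ (the defining sum~\eqref{eq:Fdef} is empty for a negative index), the boundary term $a_{-1}$ vanishes and the telescope collapses to $-a_t=-q^{m-\ell+2t}F_t^{(m-2)}(\ell-1)$.

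Putting these together yields $W_{2\ell}^\tau(2t+1,m)=\frac{q-1}{q}\sum_{r=0}^{2t+1}\mu_r(m)-\tau\frac{q-1}{q}\,q^{m-\ell+2t}F_t^{(m-2)}(\ell-1)$, which is exactly $W_{1}(2t+1,m)-\tau\frac{q-1}{q}\left(q^{m-\ell+2t}F_t^{(m-2)}(\ell-1)\right)$ as stated. The only genuinely delicate point---and the one I expect to be the main obstacle---is this telescoping in the $\tau$-term: one must correctly match the shifted index $s-1$ against $s$ and verify that the vanishing of the negative-index $F$-value is precisely what cancels the interior of the sum, leaving a single clean boundary term. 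Everything else is bookkeeping of the kind already performed for the even case in Theorem~\ref{thm:Weven}.
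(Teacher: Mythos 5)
Your proposal is correct and follows essentially the same route as the paper's proof: summing the restricted weights of Proposition~\ref{prop:restweight} via \eqref{eq:Wwrel}, splitting by parity so the $q^{2s}F_s^{(m-1)}$ terms cancel in pairs, and collapsing the $\tau$-dependent sums by the telescoping argument with the convention $F_{-1}^{(m-2)}(\ell-1)=0$. The telescoping step you flag as delicate is exactly the one the paper relies on (implicitly in the displayed computation), so there is nothing to add.
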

\begin{proof}
The proof of the theorem is exactly as of Theorem \ref{thm:Weven}. Again using Proposition \ref{prop:restweight}, we have
\begin{eqnarray*}
W_{2\ell+1}(2t+1,m) &=&\sum\limits_{s=0}^{2t+1} \mathrm{w}_{2\ell+1}(s, m)\\
&=& \sum\limits_{s=0}^{t}  \mathrm{w}_{2\ell+1}(2s, m) + \sum\limits_{s=0}^{t} \mathrm{w}_{2\ell+1}(2s+1, m)\\
&=&\dfrac{q-1}{q}\left(  \sum\limits_{s=0}^{t} \mu_{2s}(m) - q^{2s} F_s^{(m-1)}(\ell)    \right)\\
&+& \dfrac{q-1}{q}\left(  \sum\limits_{s=0}^{t} \mu_{2s+1}(m) + q^{2s} F_s^{(m-1)}(\ell)    \right)\\
&=& \dfrac{q-1}{q}\sum\limits_{s=0}^{2t}\mu_s(m).
\end{eqnarray*}

Similarly, 
\begin{eqnarray*}
W_{2\ell}^\tau(2t+1,m) &=&\sum\limits_{s=0}^{2t+1} \mathrm{w}_{2\ell}^\tau(s, m)\\
&=& \sum\limits_{s=0}^{t}  \mathrm{w}_{2\ell}^\tau(2s, m) + \sum\limits_{s=0}^{t} \mathrm{w}_{2\ell}^\tau(2s+1, m)\\
&=&\dfrac{q-1}{q}\left(  \sum\limits_{s=0}^{t} \mu_{2s}(m)-q^{2s}F_s^{(m-1)}(\ell-1)+\tau q^{m-\ell+2s-2}F_{s-1}^{(m-2)}(\ell-1)  \right)\\
&+& \dfrac{q-1}{q}\left(  \sum\limits_{s=0}^{t} \mu_{2s}(m) + q^{2s}F_s^{(m-1)}(\ell-1)-\tau q^{m-\ell+2s}F_{s-1}^{(m-2)}(\ell-1)  \right)\\
&=&\dfrac{q-1}{q}\sum\limits_{s=0}^{2t}\mu_s(m) -\tau \frac{q-1}{q}\left(q^{m-\ell+2t}F_t^{(m-2)}(\ell-1)\right).
\end{eqnarray*}
This completes the proof of the theorem.
\end{proof}
 Now, we are ready to prove the main theorem of this article. In the next theorem, we determine the minimum distance of the code $\C$. Recall that we have proved \cite[Theorem 4.11]{BJS} that $W_1(2t, m)$ is the minimum distance of the code $\Ceven.$

\begin{theorem}
    \label{thm:MinDist}
    The minimum distance of the code $\Ceven$ is $W_1(2t, m)=W_2(2t,m)$  and the minimum distance of the code $\Codd$ is $W_2^{1}(2t+1, m)$.
\end{theorem}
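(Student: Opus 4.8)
The plan is to read the minimum distance straight off the complete weight lists in Theorems \ref{thm:Weven} and \ref{thm:Wodd}, using a single elementary estimate on the numbers $F_r^{(m)}(s)$. By Remark \ref{rmk:1} the nonzero codeword weights of $\C$ are exactly the $W_k^\tau(t,m)$ with $1\le k\le m$ (both $\tau=\pm1$ being available when $k$ is even, $\tau=0$ when $k$ is odd), so the minimum distance is the least of these. The one input I need is the bound
$$
\left|F_r^{(m)}(s)\right|=\left|\sum_{B\in\Omega_r}[A,B]\right|\le\sum_{B\in\Omega_r}1=F_r^{(m)}(0),
$$
valid for any $A\in\Omega_s$: it follows at once from \eqref{eq:F function} because every $[A,B]$ is a root of unity, and the final equality holds since $[0,B]=\chi(0)=1$. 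Thus $F_r^{(m)}(s)\le F_r^{(m)}(0)$ for all $s$, and $F_r^{(m)}(0)$ equals the (nonnegative) number of rank-$2r$ skew-symmetric $m\times m$ matrices; the same bound holds verbatim with $m$ replaced by $m-1$ or $m-2$.

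For $\Ceven$, Theorem \ref{thm:Weven} shows that every weight $W_{2\ell+1}(2t,m)$ and $W_{2\ell}^\tau(2t,m)$ is a common constant minus $\tfrac{q-1}{q}q^{2t}$ times $F_t^{(m-1)}(\ell)$, respectively $F_t^{(m-1)}(\ell-1)$. Minimizing a weight thus means maximizing the pertinent value of $F_t^{(m-1)}$, and by the bound above this maximum occurs at argument $0$, i.e.\ at $\ell=0$ in the odd-rank family and $\ell=1$ in the even-rank family. These are exactly $W_1(2t,m)$ and $W_2(2t,m)$, which are equal (each being the constant minus $\tfrac{q-1}{q}q^{2t}F_t^{(m-1)}(0)$), and therefore equal the minimum distance.

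For $\Codd$ the argument is the same in spirit but needs one extra comparison. By Theorem \ref{thm:Wodd} every odd-rank weight is the fixed number $W_1(2t+1,m)=\tfrac{q-1}{q}\sum_{r=0}^{2t+1}\mu_r(m)$, whereas
$$
W_{2\ell}^\tau(2t+1,m)=W_1(2t+1,m)-\tau\,\tfrac{q-1}{q}\,q^{m-\ell+2t}F_t^{(m-2)}(\ell-1).
$$
Since for each even rank $2\ell$ both types are present, the smaller of the two corresponding weights is obtained by taking $\tau$ equal to the sign of $F_t^{(m-2)}(\ell-1)$, which turns the correction term into $\tfrac{q-1}{q}q^{m-\ell+2t}\bigl|F_t^{(m-2)}(\ell-1)\bigr|$. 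Now the two bounds $\bigl|F_t^{(m-2)}(\ell-1)\bigr|\le F_t^{(m-2)}(0)$ and $q^{m-\ell+2t}\le q^{m-1+2t}$ (for $\ell\ge1$) are both saturated precisely at $\ell=1$; since $F_t^{(m-2)}(0)\ge0$ the optimal sign there is $\tau=1$. Hence the smallest even-rank weight is $W_2^1(2t+1,m)$, and as $F_t^{(m-2)}(0)\ge0$ it does not exceed the common odd-rank weight $W_1(2t+1,m)$, which yields the minimum distance $W_2^1(2t+1,m)$.

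The substantive point, and the only place any care is required, is the simultaneous saturation of the two bounds in the odd case: because both the factor $q^{m-\ell+2t}$ and the ceiling $F_t^{(m-2)}(0)$ for $\bigl|F_t^{(m-2)}(\ell-1)\bigr|$ are largest at the smallest admissible $\ell$, no value $\ell>1$ can produce a lighter codeword. The remaining bookkeeping is to record the degenerate extremal ranks: when $t$ is maximal for the parity of $m$ the relevant $F_t^{(m-1)}$ or $F_t^{(m-2)}$ vanishes identically, so all competing weights coincide and the asserted identity $W_1=W_2$, respectively the value $W_2^1$, continues to hold trivially.
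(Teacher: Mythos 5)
Your proposal is correct and follows essentially the same route as the paper: both read the minimum off the weight formulas of Theorems \ref{thm:Weven} and \ref{thm:Wodd} and reduce everything to the single estimate $F_t^{(m)}(0)\ge\bigl|F_t^{(m)}(\ell)\bigr|$, proved by the triangle inequality applied to the character sum \eqref{eq:F function}. The only differences are organizational (you optimize over $\tau$ by taking the sign of $F_t^{(m-2)}(\ell-1)$ where the paper writes out the two inequalities explicitly), so there is nothing substantive to add.
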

\begin{proof}
From Theorem \ref{thm:Weven}, we have
\begin{eqnarray*}
W_k(2t, m) - W_1(2t, m) =\begin{cases}
\frac{q-1}{q}q^{2t}\left(F_t^{(m-1)}(0)-F_t^{(m-1)}(\ell)\right),  & \text{ if }k=2\ell+1\\
\frac{q-1}{q}q^{2t}\left(F_t^{(m-1)}(0)-F_t^{(m-1)}(\ell-1)\right)  & \text{ if }k=2\ell\\   
\end{cases}
\end{eqnarray*}
Therefore, $W_1(2t, m)$ is the minimum distance of the code $\Ceven$ if and only if $$
F_t^{(m-1)}(0)-F_t^{(m-1)}(\ell)\geq 0 \text{ for all $\ell\geq 0$. }
$$ 
On the other hand, for the code $\Codd$, we have
$$
W_{2\ell+1}(2t+1, m) - W_2^{1}(2t+1, m) =\frac{q-1}{q}q^{m-1+2t}F_t^{(m-2)}(0)
$$
 and 
 $$
 W_{2\ell}^\tau(2t+1, m) - W_2^{1}(2t+1, m) =\frac{q-1}{q}\left(q^{m-1+2t}F_t^{(m-2)}(0)-\tau q^{m-\ell+2t}F_t^{(m-2)}(\ell-1)\right).
 $$
 Hence,   $W_2^{1}(2t+1, m)$ is the minimum distance of the code $\Codd$ if and only if 
 \begin{enumerate}
     \item $F_t^{(m-2)}(0)\geq 0$, and 
     \item  $F_t^{(m-2)}(0)\geq \tau q^{-\ell+1}F_t^{(m-2)}(\ell-1)$ for all $\ell\geq 1$.
 \end{enumerate}
 Since $\tau\in\{1, -1\}$ and $-\ell+1\leq 0$, to complete the proof it is enough to show that 
 $$
 F_t^{(m)}(0) \geq \left| F_t^{(m)}(\ell)\right|
 $$
 for all $m$ and $\ell\geq 0$. From equation \eqref{eq:F function}, we have for any skew-symmetric matrix $A\in\Omega_\ell$
\begin{align*}
\left| F^{(m)}_t(\ell)\right|&=\left |\sum_{B\in\Omega_t}[A, B] \right| \\
       &\leq \sum_{B\in\Omega_t}\left|[A, B] \right| \\
      = & \sum_{B\in\Omega_t}\left |\chi\left(\sum_{i<j}a_{ij}b_{ij}\right)\right| \\
        &\leq \sum_{B\in\Omega_t}1 \\
       &=\left |\Omega_t\right| = F^{(m)}_t(0) \qedhere
\end{align*}
where the last inequality follows from the fact that $\chi $ is a non-trivial character.
\end{proof}

\begin{remark}
\rm{The type $\tau=1$ appearing as the superscript $1$ in $W_2^{1}(2t+1, m)$ in Theorem \ref{thm:MinDist} tells us that the 
diagonal matrix $G=diag(1,\delta,0,\cdots,0)$ described on p. 4-5 is of hyperbolic (as opposed to elliptic) type. Here we have used the notation of \cite{Schmidt2020}. That this $G$ is hyperbolic is the same as saying that $-\delta$ is a square, which typically happens when $\delta=-1.$ In \cite{BJS} we indexed weights as $W_k^{\delta}(a,m)$. Hence the weight $W_k^{1}(2t+1,m)$ appearing in Theorem \ref{thm:MinDist} is called $W_k^{-1}(2t+1,m)$ in the notation of \cite{BJS} and \cite{BJS1} (and also $W_k^{1}(2t+1,m)$ only if $-1$ is a square in $\Fq).$}

We observe that the last part of Theorem \ref{thm:MinDist} then confirms Conjecture 4.13 of \cite{BJS} and \cite{BJS1}.
Moreover we see from Theorems \ref{thm:Weven} and \ref{thm:Wodd} that the codes $C_{symm}(2t, m)$ have at most $[\frac{m+1}{2}]$ different weights, and the codes $C_{symm}(2t+1, m)$ have at most $2[\frac{m}{2}]+1$ different weights, as indicated by the final tables in \cite{BJS1}.
    
\end{remark}

\begin{corollary}
From Theorems \ref{thm:Weven} and \ref{thm:MinDist} it follows that:
In the even case the minimum distance is computed by $c_f$, for $f=X_{1,1}$, and also by $f=X_{1,1}+\delta X_{2,2}$ for any $\delta$ in $\Fq$. In the odd case it (follows from  Theorem \ref{thm:MinDist} that it) is computed by $f=X_{1,1}-\delta X_{2,2},$ for any $\delta$ a square in $\Fq$.

\end{corollary}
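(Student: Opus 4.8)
The statement names explicit minimizing codewords, so the plan is to invert the dictionary between linear forms and symmetric matrices: for each prescribed $f$ I would compute the associated matrix $B_f$, read off its rank and type, and then match the resulting weight against the minimum distances found in Theorem \ref{thm:MinDist}. Recall that $f=\sum_{i\le j} f_{ij}x_{ij}$ corresponds to $B_f=(b_{ij})$ with $b_{ii}=f_{ii}$ and $b_{ij}=f_{ij}/2$ for $i<j$, and that by Corollary \ref{cor:charsumcount2} together with Remark \ref{rmk:1} one has $\wt(c_f)=W_k^\tau(t,m)$, where $k=\rank(B_f)$ and $\tau=\mathrm{type}(B_f)$.

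For the even case I would argue as follows. For $f=X_{1,1}$ we get $B_f=\diag(1,0,\dots,0)$ of rank $1$, so $\wt(c_f)=W_1(2t,m)$, which is the minimum distance by Theorem \ref{thm:MinDist}. For $f=X_{1,1}+\delta X_{2,2}$ with $\delta\neq 0$ we get $B_f=\diag(1,\delta,0,\dots,0)$ of rank $2$ and some type $\tau\in\{\pm1\}$; by Corollary \ref{cor:Weven1} the weight $W_2^\tau(2t,m)$ is independent of $\tau$, and by Corollary \ref{cor:Weven2} it equals $W_1(2t,m)$. Since $\delta=0$ returns to the previous case, $\wt(c_f)=W_1(2t,m)$ for every $\delta\in\Fq$, as claimed.

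For the odd case I would take $f=X_{1,1}-\delta X_{2,2}$ with $\delta$ a nonzero square, giving $B_f=\diag(1,-\delta,0,\dots,0)$ of rank $2$, so $\wt(c_f)=W_2^\tau(2t+1,m)$ with $\tau=\mathrm{type}(B_f)$. The key point I must establish is $\tau=1$, i.e.\ that the binary form $x_1^2-\delta x_2^2$ is hyperbolic. Using the classification of nondegenerate binary quadratic forms over $\Fq$ by discriminant modulo squares --- a form $\diag(a,b)$ is isotropic exactly when $-ab$ is a square --- and noting $-ab=\delta$ here, the hypothesis that $\delta$ is a square forces $\tau=1$. Hence $\wt(c_f)=W_2^1(2t+1,m)$, the minimum distance by Theorem \ref{thm:MinDist}. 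This is consistent with the sign convention in the Remark, where $G=\diag(1,\delta,0,\dots)$ is hyperbolic iff $-\delta$ is a square.

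The hard part will be the type computation in the odd case: everything else is routine unwinding of the form--matrix dictionary together with an appeal to the type-independence corollaries, but one must correctly convert the hypothesis ``$\delta$ is a square'' into ``hyperbolic type,'' keeping careful track of the sign in $B_f=\diag(1,-\delta)$ (whose determinant is $-\delta$, so the form is isotropic iff $-\det=\delta$ is a square). I would double-check this sign against the translation in the Remark to be sure that the superscript $1$ in $W_2^1$ really corresponds to $\delta$ a square in the normalization $f=X_{1,1}-\delta X_{2,2}$.
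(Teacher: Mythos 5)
Your proposal is correct and is precisely the unwinding the paper intends (the corollary is stated without a written proof): you translate $f$ into $B_f$, apply Corollaries \ref{cor:Weven1}--\ref{cor:Weven2} and Theorem \ref{thm:MinDist}, and the key type computation --- $\diag(1,-\delta)$ is hyperbolic iff $-\det=\delta$ is a square --- agrees with the sign convention spelled out in the paper's Remark. The only caveat is the implicit reading of ``$\delta$ a square'' as ``$\delta$ a \emph{nonzero} square'' in the odd case (for $\delta=0$ one gets rank $1$ and weight $W_1(2t+1,m)$, which is generally not the minimum), but that is an imprecision in the statement rather than in your argument.
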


\section{Acknowledgment} 
Peter Beelen was supported by a research grant (VIL``52303”) from Villum Fonden.”

Trygve Johnsen was partially supported by the project Pure Mathematics in
Norway, funded by the Trond Mohn Foundation, and also by the UiT Aurora
project MASCOT. 

Prasant Singh was supported by SERB SRG grant SRG/2022/001643 from DST, Govt of India.

\newpage

\end{document}